\begin{document}
\newtheorem{theorem}{Theorem}[section]
\newtheorem{lemma}[theorem]{Lemma}
\newtheorem{corollary}[theorem]{Corollary}
\newtheorem{prop}[theorem]{Proposition}
\newtheorem{definition}[theorem]{Definition}
\newtheorem{remark}[theorem]{Remark}


 \def\ad#1{\begin{aligned}#1\end{aligned}}  \def\b#1{{\bf #1}} \def\hb#1{\hat{\bf #1}}
\def\a#1{\begin{align*}#1\end{align*}} \def\an#1{\begin{align}#1\end{align}}
\def\e#1{\begin{equation}#1\end{equation}} \def\t#1{\hbox{\rm{#1}}}
\def\dt#1{\left|\begin{matrix}#1\end{matrix}\right|}
\def\p#1{\begin{pmatrix}#1\end{pmatrix}} \def\c{\operatorname{curl}}
 \numberwithin{equation}{section} \def\P{\Pi_h^\nabla}

\def\bg#1{{\pmb #1}} 

\title  [Virtual elements]
   {Stabilizer-free polygonal and polyhedral virtual elements}

\author {Yanping Lin}
\address{Department of Applied Mathematics, The Hong Kong Polytechnic University, Hung Hom, Hong Kong}
\email{yanping.lin@polyu.edu.hk}
\thanks{Yanping Lin is supported in part by HKSAR GRF 15302922  and polyu-CAS joint Lab.}

\author {Mo Mu}
\address{Department of Mathematics, Hong Kong University of Science and Technology, Clear Water Bay, Kowloon, Hongkong, China}
\email{mamu@ust.hk}
\thanks{Mo Mu is supported in part by Hong Kong RGC CERG HKUST16301218. }

\author { Shangyou Zhang }
\address{Department of Mathematical
            Sciences, University
     of Delaware, Newark, DE 19716, USA. }
\email{szhang@udel.edu }

\date{}

\begin{abstract} Stabilizer-free $P_k$ virtual elements are constructed on 
  polygonal and polyhedral meshes.
Here the interpolating space is the space of continuous $P_k$ polynomials on a
   triangular-subdivision of each polygon, or a tetrahedral-subdivision of
   each polyhedron. 
With such an accurate and proper interpolation, the stabilizer of the virtual 
  elements is eliminated while the system is kept positive-definite.  
We show that the stabilizer-free virtual elements converge at the optimal order in
  2D and 3D.
Numerical examples are computed, validating the theory.

\end{abstract}

\keywords{ virtual element, stabilizer free, elliptic equation,
      Hsieh-Clough-Tocher macro-triangle, triangular mesh. }

\subjclass[2010]{ 65N15, 65N30}

\maketitle \baselineskip=16pt

\section{Introduction}
The virtual element method is proposed and studied in  
\cite{Beirao,Beirao16, Cao-Chen, Cao-Chen2,  Chen1, Chen2, Chen3,
  Feng-Huang,Feng-Huang2, Huang1, Huang2, Huang3}. 
In this work, we construct stabilizer-free $P_k$ virtual elements on
  polygonal and polyhedral meshes.
 
We consider the Poisson equation, 
\an{ \label{p-e} \ad{ -\Delta  u & = f  && \t{in } \ \Omega, \\
            u&=0 && \t{on } \ \partial \Omega, } }
where $\Omega\subset \mathbb{R}^d$ is a bounded polygonal or polyhedral
     domain and $f\in L^2(\Omega)$.
The variation form reads:  Find $u\in H^1_0(\Omega)$ such that
\an{ \label{w-e} \ad{ (\nabla u, \nabla v) & = ( f,v)  && \forall v \in  H^1_0(\Omega),} }
where $(\cdot, \cdot)$ is the $L^2$ inner product on $\Omega$ 
    and we have $|v|_1^2=(\nabla v,\nabla v)$.

Let $\mathcal{T}_h=\{ K \}$ be a quasi-uniform polygonal or polyhedral mesh 
   on $\Omega$ with $h$ as the
  maximum size of the polygons or polyhedrons $K$. 
 Let $\mathcal{E}_h$ denote the set of edges $e$ in $\mathcal{T}_h$.
 In 3D, let $\mathcal{F}_h$ be the set of all face-polygons $F$ in $\mathcal{T}_h$.
For $k\ge 1$, the virtual element space is defined as
\an{ \label{t-V-h} \ad{ \tilde 
    V_h=\{ v\in H^1_0(\Omega) & : \tilde v \in \mathbb{B}_k(\mathcal{E}_h ),
   \Delta \tilde v|_K \in P_{k-2}(K) \}  
     \t{\ in 2D,  or as }\\
\tilde  V_h=\{ v\in H^1_0(\Omega) & : \tilde v \in \mathbb{B}_k(\mathcal{E}_h); 
    \Delta_F \tilde v|_F \in P_{k-2}(F), \ F\in \mathcal{F}_h; \\ &\qquad
   \Delta \tilde v|_K \in P_{k-2}(K) \}   } }
   in 3D, where $P_{-1}=\{0\}$,
   $ \mathbb{B}_k(\mathcal{E}_h)=\{ v\in C^0(\mathcal{E}_h) : v|_e\in P_k(e)
   \ \forall e\subset \mathcal{E}_h \}$, and $\Delta_F$ is the 2D Laplacian on
   the flat polygon $F$.
In computation, the  
   interpolated virtual finite element space on $\mathcal{T}_h$ is defined by
\an{ \label{V-h} V_h = \{ v_h=\Pi_h^\nabla \tilde v \ : \ v_h|_K \in \mathbb{V}_k(K), 
  \ K\in\mathcal{T}_h; \
   \tilde v\in \tilde V_h \}, }
where $\mathbb{V}_k(K)=P_k(K)$ for the standard virtual elements (and to be defined below
   in \eqref{V-k} for the new virtual element method),
   and $v_h=\Pi_h^\nabla \tilde v$ is the local $H^1$-projection:   
\a{ \left\{ \ad{ (\nabla(v_h-\tilde v), \nabla w_h)_K&=0\quad \forall 
     w_h=\Pi_h^\nabla \tilde w \in \mathbb{V}_k(K), \\
           \langle v_h-\tilde v, w_h\rangle_{\partial K}& =0 \quad
     \forall w_h\in P_k(K). } \right. }
The stabilizer-free virtual element equation reads:  
   Find $u_h=\P \tilde u\in V_h$ such that
\an{\label{f-e} (\nabla u_h,\nabla v_h)_h = (f,v_h) 
     \quad \forall \tilde v\in \tilde V_h, \ v_h=\P \tilde v, }
where $(\nabla u_h, \nabla v_h)_h=\sum_{K\in \mathcal{T}_h} (\nabla u_h, \nabla v_h)_K$.  
In 3D, to find the value of $\tilde v$ inside a face-polygon,  we 
  use the moments $\int_F \tilde v_h p_{k-2} dS$ instead 
     of the surface Laplacian values $\Delta_F \tilde v_h\in P_{k-2}(F)$, as the
   latter uniquely determines $\tilde v_h$ and consequently uniquely determines
   the $P_{k-2}$ moments of $\tilde v_h$ on $F$.

Because the dimension of $V_h$ is less than that of $\tilde V_h$ (equal only when $k=1$ on
   triangular and tetrahedral meshes),  
   the bilinear form in \eqref{f-e} is not positive-definite and 
   the equation does not have a unique solution.
Thus a discrete stabilizer must be added to the equation \eqref{f-e} if the
  interpolation space $\mathbb{V}_k(K)$ is defined to be $P_k(K)$.

With a stabilizer, many degrees of freedom do not fully contribute their approximation
   power as they are averaged into a smaller dimensional vector space.
To be a stabilizer free virtual element, the interpolation space must have at least no less 
   degrees of freedom on each element.
But raising the polynomial degree of $\mathbb{V}_k(K)$ in \eqref{V-h} does not work.
It works only for $P_1$ virtual elements in 2D with special treatment, cf.
  \cite{Berrone-0,Berrone-1}, where
   $\mathbb{V}_k(K)=P_{k+l(n)}(K)$ and $n$ is the number of edges of $K$,
   in the virtual element space \eqref{V-h}. 
Another stabilizer-free method for $k=1$ is proposed in \cite{Berrone} that
   $\mathbb{V}_k(K)=P_k(K)\cup H_l(K)$, where $H_l(K)$ is the set of 2D harmonic
   polynomials of degree $l$ or less, and $l$ depends on the number of
   edges of $K$.
This is an excellent idea because the $H_l$ harmonic polynomials may help
  to gather all boundary edge values while
  not destroying the gradient approximation, as harmonic polynomials 
    have vanishing Laplacian.
The same idea has been implemented in some other finite elements \cite{Al-Taweel,
 Sorokina1,Sorokina2}. 
But the method of \cite{Berrone} is also for 2D $P_1$ polygonal elements, as it
  is shown numerically not working for $k>3$ in \cite{Xu-Z}.

We propose to use macro-triangles or macro-tetrahedrons 
 $C^0$-$P_k$ spaces as the interpolation space $\mathbb{V}_k(K)$ in \eqref{V-h}.
This method was first used in \cite{Xu-Z} for $P_k$ triangular virtual elements only.
In \cite{Xu-Z}, each triangle $K$ is split into three triangles by connecting 
 its barycenter with the three vertices.   
 $K$ is called a Hsieh-Clough-Tocher macro-triangle \cite{Clough, Sorokina2,
 Xu-Zhang,  ZhangMG, Zhang3D}.
In this work, we extend the method to polygonal and polyhedral virtual elements.
It turns out that the triangular virtual element would be the most complicated case,
  as we have to introduce a new point in the subdivision in order to get
   a sufficiently large dimensional vector space.
For most polygons and polyhedrons we can subdivide them into triangles and
  tetrahedrons respectively without adding any new point, 
  when we have enough face-edge and 
   face-polygon degrees of freedom.
 
A different interpolation space $\mathbb{V}_k$ changes the
    quadrature rule for computing $(\nabla u_h, \nabla v_h)=
    (\nabla \Pi_h^\nabla \tilde u, \nabla \Pi_h^\nabla \tilde v)$.
Such an accurate local interpolation does not increase the computational cost,
   once the local stiffness matrix is generated.
On the other side, eliminating the stabilizer
   may reduce computational cost, and may  
   improve the condition number of the resulting linear system.
More importantly, a stabilizer-free method may utilize fully every degree of
  freedom in the discrete approximation.
Thus stabilizer-free methods may result in superconvergence.

The stabilizer is eliminated in the weak Galerkin finite element method  
  \cite{Al-Taweel-Wang,Feng-Zhang, Gao-Z,Mu1,Wang-Z20,
      Wang-Z21,Ye-Z20a,  Ye-Z20b,Ye-Z20c, Ye-Z21b, Ye-Z21c, Ye-Z21d}.
It is also eliminated 
    in the $H(\t{div})$ finite element method \cite{Mu21,Ye-Z21a, Ye-Z21f}.
The stabilizer-free  $C^0$ or $C^{-1}$
   nonconforming finite elements are constructed
      for the biharmonic equation \cite{Ye-Z20d,Ye-Z22a,Ye-Z22b}.
We have stabilizer-free  discontinuous Galerkin finite element methods
    \cite{Feng-Z21,Mu23,  Ye-Z20-1,Ye-Z20-2,Ye-Z20d}.
Without a stabilizer,
    two-order superconvergent weak Galerkin finite elements are found in \cite{Al-Taweel-Z21,
    Wang-Z23,Wang-Z23a,Ye-Z21g,Ye-Z23b, Ye-Z23e}.
Also two-order superconvergent stabilizer-free discontinuous Galerkin
    finite elements are constructed in \cite{Ye-Z22c,Ye-Z22d,Ye-Z23c} 
   for second order elliptic   equations.
One or two-order superconvergent weak Galerkin finite elements are 
   found for the Stokes equations in \cite{Mu21b,Ye-Z21e, Ye-Z22e}.
Four-order superconvergent weak Galerkin finite elements \cite{Ye-Z23d}
   and four-order  superconvergent discontinuous Galerkin
    finite elements \cite{Ye-Z22d,Ye-Z23e} are all stabilizer-free,
   for the biharmonic equation.
For example, a $P_3$ discontinuous finite element solution is as accurate as
  a $C^1$-$P_7$ finite element solution in solving a 2D biharmonic equation.

In this paper, we show
  that with the macro-triangle/tetrahedron
     interpolation, the stabilizer-free virtual element equation 
   \eqref{f-e} has a unique and quasi-optimal solution.
Numerical examples on the new stabilizer-free virtual elements are 
  computed, verifying the theory.

                \vskip .7cm
 
\section{The 2D interpolation} 

We define in this section the 2D macro-triangle interpolation space and show 
   that the stabilizer-free virtual element equation has a unique solution.

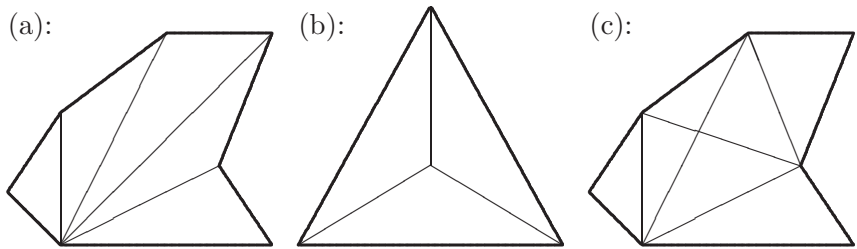
\begin{figure}\centering
\begin{picture}(330,100)(0,0)
\put(0,0){\begin{picture}(100,100)(0,0) 
      \put(20,0){\line(2,1){60}}\put(20,0){\line(1,1){80}}
      \put(20,0){\line(1,2){40}}\put(20,0){\line(0,1){50}}    \def\la{\circle*{0.3}}
   \put(0,80){(a):}

     \multiput(  20.00,   0.00)(   0.250,   0.000){320}{\la}
     \multiput( 100.00,   0.00)(  -0.139,   0.208){144}{\la}
     \multiput(  80.00,  30.00)(   0.093,   0.232){215}{\la}
     \multiput( 100.00,  80.00)(  -0.250,   0.000){160}{\la}
     \multiput(  60.00,  80.00)(  -0.200,  -0.150){200}{\la}
     \multiput(  20.00,  50.00)(  -0.139,  -0.208){144}{\la}
     \multiput(   0.00,  20.00)(   0.177,  -0.177){113}{\la}

    \end{picture} }
\put(110,0){\begin{picture}(100,100)(0,0) 
      \put(50,30){\line(-5,-3){50}}\put(50,30){\line(5,-3){50}}
      \put(50,30){\line(0,1){60}}    \def\la{\circle*{0.3}} 
     \multiput(   0.00,   0.00)(   0.250,   0.000){400}{\la}
     \multiput( 100.00,   0.00)(  -0.121,   0.219){411}{\la}
     \multiput(   0.00,   0.00)(   0.121,   0.219){411}{\la}
   \put(0,80){(b):}
    \end{picture} }
\put(220,0){\begin{picture}(100,100)(0,0) 
      \put(20,0){\line(2,1){60}}\put(80,30){\line(-2,5){20}}
      \put(20,0){\line(1,2){40}}\put(20,0){\line(0,1){50}} \put(80,30){\line(-3,1){60}}
   \def\la{\circle*{0.3}} 
     \multiput(  20.00,   0.00)(   0.250,   0.000){320}{\la}
     \multiput( 100.00,   0.00)(  -0.139,   0.208){144}{\la}
     \multiput(  80.00,  30.00)(   0.093,   0.232){215}{\la}
     \multiput( 100.00,  80.00)(  -0.250,   0.000){160}{\la}
     \multiput(  60.00,  80.00)(  -0.200,  -0.150){200}{\la}
     \multiput(  20.00,  50.00)(  -0.139,  -0.208){144}{\la}
     \multiput(   0.00,  20.00)(   0.177,  -0.177){113}{\la}
   \put(0,80){(c):}
    \end{picture} }
\end{picture} 
\caption{ (a) A polygon is subdivided without any new point. \quad
          (b) A triangle must be subdivided with one new point.\quad
          (c) A polygon is subdivided with one new point. }\label{f-3-p}
\end{figure}

Let $K$ be a 2D polygon.  The only requirement is that $K$ is subdivided into
  more than one tetrahedron.   If $K$ has only three sides, i.e., $K$ is a triangle,
   we add a barycenter point to the triangle, shown as in the Figure \ref{f-3-p}(b) 
   macro triangle on $K$.  
If $K$ is a polygon of four sides or more,  we usually can connect some vertices of
   $K$ to subdivide $K$ into a macro-triangle polygon, cf. Figure \ref{f-3-p}(a).
If needed, we can add one or two inner points to subdivide $K$, cf. 
    Figure \ref{f-3-p}(c),  where we intentionally add a new point for the
   purpose of illustration.
With the subdivision $K=\cup_{T_i\subset K} T_i$,  we define the
   interpolation space as, for $k\ge 1$, 
\an{\label{V-k} \mathbb{V}_k(K) =\{ v_h \in C^0(K) :
    v_h|_{T_i} \in P_k(T_i), \ T_i\subset K\}.  }
One can easily count the internal degrees of freedom of $\mathbb{V}_k(K)$ to get
\a{ \dim (\mathbb{V}_k\cap H^1_0(K))
     > \dim P_{k-2}(K).  }

The interpolation operator is defined to be the local $H^1$-projection, i.e.,
   $v_h=\P \tilde v \in \mathbb{V}_k $ such that $v_h|_{\partial K} = \tilde v$
   and 
\an{ \label{l-e} (\nabla (v_h-\tilde v), \nabla w_h)=0 \quad \forall w_h\in \mathbb{V}_k(K).
   }

\begin{lemma} \label{l-2d-w}
The interpolation operator
 $\P$ is well defined in \eqref{l-e} and it preserves $P_k$ polynomials,
\an{\label{p-p} \P \tilde v = \tilde v\quad \ \t{if } \ \tilde v\in P_k(K). }
\end{lemma}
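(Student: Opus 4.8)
The plan is to establish well-posedness of the local problem \eqref{l-e} first, then derive the polynomial-preservation identity as a consequence. For well-posedness, I would view \eqref{l-e} as a square linear system: given the boundary data $\tilde v|_{\partial K}$ — which is a well-defined continuous piecewise-$P_k$ function on $\partial K$ agreeing with the macro-triangulation's boundary edges — we seek $v_h \in \mathbb{V}_k(K)$ with that trace and with $(\nabla(v_h - \tilde v), \nabla w_h)_K = 0$ for all $w_h \in \mathbb{V}_k(K)$. Since the boundary values are prescribed, the effective unknown lives in the affine space $v_h^{(0)} + (\mathbb{V}_k(K) \cap H^1_0(K))$ for any fixed $v_h^{(0)} \in \mathbb{V}_k(K)$ with the correct trace (such a lift exists because $\mathbb{B}_k$ boundary data extends into $\mathbb{V}_k(K)$ — indeed one may extend by the piecewise polynomial that vanishes at all interior nodes of the subdivision). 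Testing with $w_h \in \mathbb{V}_k(K) \cap H^1_0(K)$ gives a square system whose homogeneous version forces $(\nabla(v_h - \tilde v), \nabla(v_h - \tilde v))_K = 0$ by taking $w_h = v_h - \tilde v$ when $\tilde v = 0$ on $\partial K$; on the connected patch $K$ this yields $v_h - \tilde v$ constant, and the zero trace forces it to vanish. Hence the square system is nonsingular and \eqref{l-e} has a unique solution. I should note the problem decouples: the component orthogonal to $H^1_0(K)$ is pinned by the trace, the $H^1_0$-component by the Galerkin equations.

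For the polynomial-preservation claim \eqref{p-p}, suppose $\tilde v \in P_k(K)$. The key observation is that $P_k(K) \subset \mathbb{V}_k(K)$: any global degree-$k$ polynomial restricts to a degree-$k$ polynomial on each subtriangle $T_i$ and is of course globally $C^0$. So $\tilde v$ itself is an admissible candidate for $v_h$. It trivially satisfies the boundary condition $v_h|_{\partial K} = \tilde v$, and it satisfies \eqref{l-e} because $(\nabla(\tilde v - \tilde v), \nabla w_h)_K = 0$ for every $w_h$. By the uniqueness just established, $\P \tilde v = \tilde v$.

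The main obstacle — really the only non-formal point — is verifying cleanly that the boundary trace $\tilde v|_{\partial K}$ is genuinely attainable by some element of $\mathbb{V}_k(K)$, so that the affine solution space is nonempty; this is where the compatibility of the macro-subdivision with $\partial K$ matters. One must check that the subdivision edges on $\partial K$ coincide with the original polygon edges (true for the constructions in Figure \ref{f-3-p}, where new points are interior), so that a continuous piecewise-$P_k$ boundary function can be interpolated at the boundary nodes of the subdivision and extended by arbitrary (say zero) interior nodal values to a function in $\mathbb{V}_k(K)$. Once that lift is in hand, everything else is the standard coercivity-on-a-quotient argument above, using that $K$ is connected so the only functions with vanishing gradient are constants.
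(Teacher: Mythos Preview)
Your proposal is correct and follows essentially the same line as the paper's proof: both argue that \eqref{l-e} is a square system (after the trace is fixed, which the paper dispatches with the single observation that $\tilde v|_{\partial K}\in\mathbb{B}_k(\mathcal{E}_h)$ while you spell out the nodal lift more explicitly), establish uniqueness by testing the homogeneous problem with $w_h=v_h$ to force $\nabla v_h=0$ and hence $v_h=0$ from the zero trace, and then deduce \eqref{p-p} from $P_k(K)\subset\mathbb{V}_k(K)$ together with uniqueness. The only cosmetic difference is that the paper phrases the polynomial-preservation step as choosing $w_h=v_h-\tilde v$ directly, and (somewhat unnecessarily) routes the constant-equals-zero conclusion through the global boundary $\partial\Omega$ rather than the local $\partial K$ as you do.
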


\begin{proof} Because $\tilde v|_{\partial K}\in \mathbb{B}_k(\mathcal{E}_h)$, 
  $v_h$ can assume the boundary
  condition $v_h=\tilde v$ exactly on $\partial K$. The linear system of equations in \eqref{l-e}
 is a finite dimensional square system. The existence is implied by the uniqueness. 
 To show the uniqueness, we let $\tilde v=0$ in \eqref{l-e}.  
  Letting $w_h=v_h$ in \eqref{l-e}, we get
\a{ \nabla v_h =\b 0 \quad \t{ on } \ K. }
Thus $v_h=c$ is a constant on $K$.  As $v_h$ is continuous on edges,  $v_h=c$ is
a global constant on the whole domain.  By the boundary condition, we get $0=\tilde v|_
   {\partial \Omega}=v_h|_{\partial \Omega}=c$.
   Hence $v_h=0$ and \eqref{l-e} has a unique solution.  

If $\tilde v\in P_k(K)\subset \mathbb{V}_k(K)$, defined in \eqref{V-h}, then the solution   
   of \eqref{l-e} says, letting $w_h=v_h-\tilde v$, 
\a{ \nabla (v_h-\tilde v)=\b 0. }
Thus $v_h-\tilde v$ is a global constant which must be zero as it vanishes at all $\partial K$.
  \eqref{p-p} is proved. 
\end{proof}
                \vskip .7cm

\begin{lemma} \label{l-2d-e}
The stabilizer-free virtual element equation \eqref{f-e} has a unique solution,
where the interpolation $\P$ is defined in \eqref{l-e}.
\end{lemma}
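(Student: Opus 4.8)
The plan is to show that the linear system \eqref{f-e} is a square system and then establish uniqueness, from which existence follows. First I would count dimensions: the trial and test spaces are both indexed by $\tilde v \in \tilde V_h$, but the bilinear form $(\nabla u_h, \nabla v_h)_h$ only sees $u_h = \P\tilde u$ and $v_h = \P\tilde v$, so the effective unknown lives in $V_h$. The right-hand side $(f, v_h)$ likewise depends only on $v_h = \P\tilde v$. Hence \eqref{f-e} is genuinely a system on $V_h$, and since $V_h$ is finite-dimensional it suffices to prove that the homogeneous problem has only the zero solution.

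For uniqueness, I would take $f = 0$ and choose the test function $\tilde v = \tilde u$, so that $v_h = u_h$. This gives $(\nabla u_h, \nabla u_h)_h = \sum_{K} |u_h|_{1,K}^2 = 0$, hence $\nabla u_h = \b 0$ on every $K$, so $u_h$ is a constant $c_K$ on each element $K$. The key point, exactly as in the proof of Lemma \ref{l-2d-w}, is that $u_h = \P\tilde u$ is globally continuous: on each edge $e$ of $\mathcal{T}_h$ we have $u_h|_e = \tilde u|_e \in P_k(e)$ (in 3D, $u_h|_F = \tilde u|_F$ on each face as well), and $\tilde u \in H^1_0(\Omega)$ is single-valued across interelement boundaries. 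Therefore the piecewise-constant values $c_K$ must agree across shared edges, forcing $u_h \equiv c$ to be a single global constant. The homogeneous boundary condition $u_h|_{\partial\Omega} = \tilde u|_{\partial\Omega} = 0$ then gives $c = 0$, so $u_h = 0$.

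It remains to pass from $u_h = 0$ back to $\tilde u = 0$, i.e. to conclude that the solution of \eqref{f-e} is unique as an element $u_h \in V_h$ (which is what the lemma asserts, since $V_h$ is the solution space and $u_h = \P\tilde u$ is the object being solved for). Since $u_h = 0$, the solution vanishes in $V_h$; this is the uniqueness statement. Existence of $u_h \in V_h$ then follows because the map from the coefficient vector of $u_h$ in a basis of $V_h$ to the residual in \eqref{f-e} is a square linear map with trivial kernel. I would also remark that $\P$ itself is well-defined on $\tilde V_h$ by Lemma \ref{l-2d-w} applied elementwise, so $V_h$ is well-defined and the construction is consistent.

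The main obstacle — and the place the argument must be handled with care — is the global continuity step: one needs that $u_h = \P\tilde u$ inherits exact matching of traces across interelement edges (and faces in 3D) from $\tilde u \in H^1_0(\Omega)$, because the elementwise projection $\P$ is defined so that $v_h|_{\partial K} = \tilde v|_{\partial K}$. Once this is in place, the "connect the constants across edges" argument is routine, but without it one could only conclude $u_h$ is piecewise constant, which is not enough. I do not expect any difficulty from the fact that $\dim V_h < \dim \tilde V_h$: the bilinear form is positive-definite \emph{on} $V_h$ precisely because of the macro-element interpolation, and that is all that is needed.
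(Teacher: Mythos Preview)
Your argument establishes that $u_h=\P\tilde u=0$ when $f=0$, but stops short of what the lemma actually requires. The discrete system \eqref{f-e} is assembled with the degrees of freedom of $\tilde u\in\tilde V_h$ as unknowns (vertex values, edge data, and the interior information $\Delta\tilde u|_K\in P_{k-2}(K)$), and the equations are indexed by $\tilde v\in\tilde V_h$; the paper says this explicitly at the start of its proof. For that square linear system to be nonsingular you must show $\tilde u=0$, not merely that its projection $u_h$ vanishes. Your reformulation ``the system is genuinely on $V_h$'' sidesteps precisely the point that distinguishes this method from the standard VEM: injectivity of $\P$ on $\tilde V_h$. If $\P$ had a nontrivial kernel, the stiffness matrix in the virtual-element degrees of freedom would be singular even though $u_h$ is well determined---this is exactly the failure mode the introduction describes for the choice $\mathbb V_k(K)=P_k(K)$, and it is what a stabilizer would otherwise repair.

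The paper closes the gap you left open as follows. From $u_h=0$ one has $\tilde u|_{\partial K}=\P\tilde u|_{\partial K}=0$ on every element. For $k=1$ there are no interior degrees of freedom, so $\tilde u=0$ immediately. For $k\ge 2$ the paper builds a piecewise-$P_2$ bubble $b_K\in H^1_0(K)\cap\mathbb V_k(K)$ that is strictly positive in the interior of $K$ (except possibly at one or two added subdivision vertices), sets $p_{k-2}=-\Delta\tilde u\in P_{k-2}(K)$ and $w_h=p_{k-2}\,b_K$, and uses the orthogonality $(\nabla\tilde u,\nabla w_h)_K=0$ from \eqref{l-e} together with integration by parts to obtain $\int_K p_{k-2}^2\,b_K\,d\b x=0$, hence $\Delta\tilde u=0$ on $K$. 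Combined with $\tilde u|_{\partial K}=0$, the Laplace problem forces $\tilde u=0$. The existence of such a bubble inside $\mathbb V_k(K)$ is exactly where the macro-triangle subdivision enters and is the substantive new idea; your closing remark that positive-definiteness on $V_h$ is ``precisely because of the macro-element interpolation'' misattributes the role of the construction---coercivity on $V_h\subset H^1_0(\Omega)$ is automatic, whereas the macro-element space is needed for the injectivity step you omitted.
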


\begin{proof} As both $\tilde u, \tilde v \in \tilde V_h$,
  \eqref{f-e} is a finite square  system of linear equations. The uniqueness of solution 
  implies the existence. 
 To show the uniqueness, we let $f=0$ and $\tilde v=\tilde u$ in \eqref{f-e}. 
   It follows that
\a{ |\P \tilde u|_{1,h} =0. }
Thus $\P \tilde u=c$ is  constant on each $K$.  But $\P \tilde u$ is continuous on the whole
 domain.  By the boundary condition, we get $0=\P \tilde u|_
   {\partial \Omega}=c$.  That is,
\an{\label{p-u-0}  \P \tilde u=0 \ \t{ and } \ 
                      \tilde u|_{\partial K}=\P \tilde u=0.  }

For $k=1$, $\tilde u$ has no internal degree of freedom, and the lemma is proved 
    by \eqref{p-u-0},
\a{ \tilde u=0, \ \t{ if } \ k=1.  }

For $k\ge 2$, let 
\an{\label{b-K} b_K &=\sum_{i\in \mathcal{N}_2} \phi_i\in  H^1_0(K)\cap \mathbb{V}_k(K), }
where $\mathcal{N}_2$ is the set of all internal mid-edge points of $\{ T_i\}$, $K=\cup T_i$,
   and $\phi_i$ is the $P_2$ Lagrange nodal basis at node $i$.
Then $b_K>0$ inside polygon $K$ if it does not have any added internal point, cf. Figure
  \ref{f-3-p}.  Otherwise, $b_K>0$ inside $K$ except at one or two
    internal points where $b_K=0$.
 
On one polygon $K$, by \eqref{l-e}, \eqref{p-u-0}  and 
   integration by parts, we have
\an{\label{l-e1} (-\Delta \tilde u, w_h) =
     (\nabla \tilde u, \nabla w_h)=0 \quad \forall w_h\in H^1_0(K)\cap \mathbb{V}_k(K).  }
 By the space $\tilde V_h$ definition \eqref{t-V-h}, we denote
\an{\label{p-k-2} p_{k-2} = -\Delta \tilde u \in P_{k-2}(K).  }
 Let the $w_h$ in \eqref{l-e1} be
\an{\label{w-h} w_h= p_{k-2} b_K
   \in H^1_0(K)\cap \mathbb{V}_k(K),  }
  where the positive $P_2$ bubble $b_K$ is defined in \eqref{b-K}. 
With the $w_k$ in \eqref{w-h}, we get from \eqref{l-e1} and \eqref{p-k-2} that
\a{ \int_K p_{k-2}^2 b_K d \b x =0.  }
As $b_K>0$ inside $K$ (other than 1 or 2 possibly internal points),  it follows that
\a{ p_{k-2}^2 =0 \ \t{ and } \ p_{k-2}  =0 \ \t{ on } \ K. }
By \eqref{p-u-0} and \eqref{p-k-2}, $\Delta \tilde  u=0$ in $K$ and $\tilde u=0$ on 
   $\partial K$.  Thus, by the unique solution of the Laplace equation,  $\tilde u=0$.
  The lemma is proved. 
\end{proof}
                \vskip .7cm

\section{The 3D interpolation} 

We define in this section the 3D macro-tetrahedron interpolation space and show 
   that the stabilizer-free virtual element equation has a unique solution when using
   the interpolation.

\def\cbt{\def\la{\circle*{0.3}} 
      \multiput(  40.00, 100.00)(  -0.093,  -0.232){430}{\la}
     \multiput(  40.00, 100.00)(   0.129,  -0.214){466}{\la}
     \multiput(  40.00, 100.00)(   0.177,  -0.177){282}{\la}
     \multiput(   0.00,   0.00)(   0.250,   0.000){400}{\la}
     \multiput(  90.00,  50.00)(   0.049,  -0.245){203}{\la}
     \multiput(  88.25,  49.03)(  -2.622,  -1.457){ 34}{\la}} 

\def\cbtf{\def\la{\circle*{0.3}}
     \multiput(  46.60,  33.30)(  -0.025,   0.249){268}{\la}
     \multiput(  46.60,  33.30)(   0.212,  -0.132){251}{\la}
     \multiput(  46.60,  33.30)(  -0.203,  -0.145){229}{\la}
     \multiput(  76.60,  50.00)(   0.250,   0.000){ 53}{\la}
     \multiput(  76.60,  50.00)(   0.106,  -0.226){220}{\la}
     \multiput(  76.60,  50.00)(  -0.148,   0.202){247}{\la}
     \multiput(  61.37,  16.09)(  -2.902,  -0.761){ 20}{\la}
     \multiput(  65.12,  15.78)(   2.733,  -1.236){ 12}{\la}
     \multiput(  64.55,  18.16)(   1.873,   2.343){ 14}{\la}
     \multiput(  45.30,  50.00)(   3.000,   0.000){ 14}{\la}
     \multiput(  41.99,  48.49)(  -1.964,  -2.268){ 22}{\la}
     \multiput(  43.17,  52.00)(  -0.198,   2.993){ 16}{\la}} 
\def\cbtii{\def\la{\circle*{0.3}}
      \multiput(  55.63,  36.78)(  -2.799,  -1.079){  2}{\la}
     \multiput(  59.17,  38.60)(   2.510,   1.643){  6}{\la}
     \multiput(  56.00,  38.82)(  -2.252,   1.982){  6}{\la}
     \multiput(  58.03,  35.57)(   0.802,  -2.891){  6}{\la}}
\def\cbtiii{\def\la{\circle*{0.3}}
     \multiput(  47.91,  48.49)(   1.972,  -2.261){  4}{\la}
     \multiput(  74.93,  48.90)(  -2.510,  -1.643){  6}{\la}
     \multiput(  62.77,  18.53)(  -0.802,   2.891){  6}{\la}
     \multiput(  44.80,  48.68)(   2.252,  -1.982){  6}{\la} }
\def\cbti{\def\la{\circle*{0.3}} 
      \multiput(  55.82,  36.41)(  -2.513,  -1.639){ 22}{\la}
     \multiput(  56.87,  39.40)(  -0.949,   2.846){ 18}{\la}
     \multiput(  59.37,  38.22)(   2.800,   1.077){ 10}{\la}
     \multiput(  59.00,  36.18)(   2.250,  -1.985){ 18}{\la}} 
\begin{figure}\centering
\begin{picture}(320,100)(0,0)
\put(0,0){\begin{picture}(100,100)(0,0)   \def\la{\circle*{0.3}}
   \put(0,80){(a):}\cbt
    \end{picture} }
\put(110,0){\begin{picture}(100,100)(0,0)  \cbt\cbtf
   \put(0,80){(b):}    
    \end{picture} }
\put(220,0){\begin{picture}(100,100)(0,0)  \cbt\cbti\cbtii\cbtf
   \put(0,80){(c):}
    \end{picture} }
\end{picture} 
\caption{ (a) A tetrahedron $K$. \quad
          (b) By adding a barycenter point to each face-triangle,  
               the face triangles of $K$ are subdivided into twelve triangles.  \quad
          (c) By adding one barycenter point of $K$,
         the tetrahedron $K$ is subdivided into twelve tetrahedrons. }\label{f-2-t}
\end{figure}
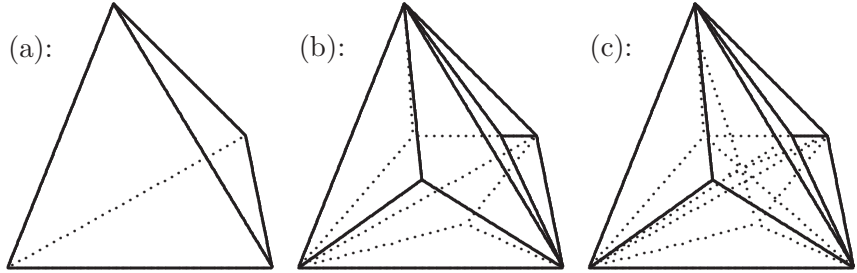

Let $K$ be a 3D polyhedron.  The first requirement is that each face-polygon $F$ must
  be subdivided into more than one triangle.
The subdivision of polygons is defined in the last section.
For example, if a face polygon $F$ has only three sides, i.e., $F$ is a triangle,
   we must add a barycenter point to subdivide it into three triangles,
   cf.  Figure \ref{f-2-t}(b).  
However if a face polygon has more than three edges,  we usually can subdivide
   it into triangles easily, cf. Figure \ref{f-3-c}(b).
After each face polygon is subdivided into more than one triangle,  the next
   requirement in subdividing $K$ is that every resulting tetrahedron has
  at least two face-triangles inside $K$. 

For example, after cutting each face-triangle of a tetrahedron $K$ into
   three triangles,  we add one more internal point to cut $K$ into twelve 
   tetrahedrons, cf. Figure \ref{f-2-t}(c).

\def\cbe{\def\la{\circle*{0.3}} 
     \multiput(   0.00,   0.00)(   0.250,   0.000){240}{\la}
     \multiput(   0.00,   0.00)(   0.000,   0.250){240}{\la}
     \multiput(  60.00,  60.00)(  -0.250,   0.000){240}{\la}
     \multiput(  40.00,  90.00)(  -0.200,  -0.150){200}{\la}
     \multiput(  40.00,  90.00)(   0.250,   0.000){240}{\la}
     \multiput( 100.00,  30.00)(   0.000,   0.250){240}{\la}
     \multiput( 100.00,  30.00)(  -0.200,  -0.150){200}{\la}
     \multiput(  60.00,  60.00)(   0.000,  -0.250){240}{\la}
     \multiput(  60.00,  60.00)(   0.200,   0.150){200}{\la}
     \multiput(  42.00,  30.00)(   3.000,   0.000){ 20}{\la}
     \multiput(  38.40,  28.80)(  -2.400,  -1.800){ 16}{\la}
     \multiput(  40.00,  32.00)(   0.000,   3.000){ 20}{\la} }
\def\dbc{ \def\la{\circle*{0.3}} 
     \multiput(  60.00,  60.00)(  -0.139,   0.208){144}{\la}
     \multiput(  60.00,  60.00)(  -0.177,  -0.177){339}{\la}
     \multiput(  60.00,  60.00)(   0.200,  -0.150){200}{\la}
     \multiput(  41.11,  28.34)(   1.664,  -2.496){ 12}{\la}
     \multiput(  38.40,  31.20)(  -2.400,   1.800){ 16}{\la}
     \multiput(  41.11,  31.66)(   1.664,   2.496){ 12}{\la}
     \multiput(  41.41,  31.41)(   2.121,   2.121){ 28}{\la} }
\def\dbd{ \def\la{\circle*{0.3}} 
     \multiput(  48.51,  43.66)(  -2.230,  -2.007){ 22}{\la}
     \multiput(  48.08,  45.57)(  -2.873,   0.862){ 16}{\la}
     \multiput(  49.57,  46.95)(  -0.651,   2.929){ 14}{\la}
     \multiput(  51.49,  46.34)(   2.230,   2.007){ 22}{\la}
     \multiput(  51.92,  44.43)(   2.873,  -0.862){ 16}{\la}
     \multiput(  50.43,  43.05)(   0.651,  -2.929){ 14}{\la} }

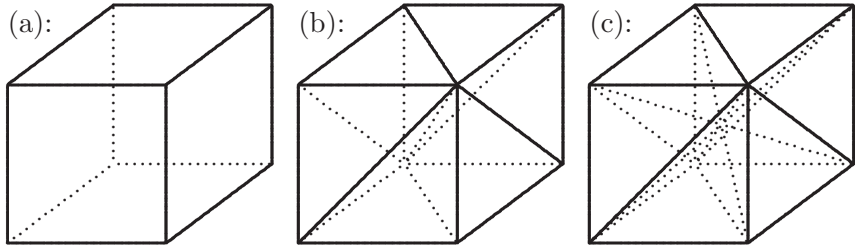
\begin{figure}\centering
\begin{picture}(330,100)(0,0)
\put(0,0){\begin{picture}(100,100)(0,0)   \def\la{\circle*{0.3}}
   \put(0,80){(a):}\cbe
    \end{picture} }
\put(110,0){\begin{picture}(100,100)(0,0)  \cbe \dbc
   \put(0,80){(b):}    
    \end{picture} }
\put(220,0){\begin{picture}(100,100)(0,0) \cbe\dbc \dbd
   \put(0,80){(c):}
    \end{picture} }
\end{picture} 
\caption{ (a) A polyhedron $K$. \quad
          (b) A polyhedron is subdivided into six tetrahedrons 
                 without adding any point.\quad
          (c) A polyhedron is subdivided into twelve tetrahedrons with one new
             point and without adding any face point. }\label{f-3-c}
\end{figure}

For example, for the polyhedron $K$ of a cube in Figure \ref{f-3-c}(a),  
    we cut each face-polygon
   into two triangles without adding any point, and we cut $K$ into six tetrahedrons without
   adding any internal point, cf.  Figure \ref{f-3-c}(b).

For example, for the polyhedron $K$ of a cube in Figure \ref{f-3-c}(a), 
   we can also subdivide it 
  by cutting each face-polygon
   into two triangles without adding any point, and  cutting $K$ into twelve
   tetrahedrons with one internal point, cf.  Figure \ref{f-3-c}(c).

\begin{figure}\centering
\begin{picture}(230,100)(0,0)
\put(0,0){\begin{picture}(100,100)(0,0)   \def\la{\circle*{0.3}}
   \put(0,80){(a):}\cbe
    \end{picture} }
\put(130,0){\begin{picture}(100,100)(0,0)  \cbe \dbc  \dbd
   \put(0,80){(b):}     \multiput(   0.00,  60.00)(   0.239,   0.072){417}{\la}
     \multiput(   0.00,  60.00)(   0.177,  -0.177){339}{\la}
     \multiput( 100.00,  90.00)(  -0.102,  -0.228){393}{\la}
     \multiput(  41.41,  88.59)(   2.121,  -2.121){ 28}{\la}
     \multiput(  39.19,  88.17)(  -1.218,  -2.741){ 32}{\la}
     \multiput(   1.92,   0.57)(   2.873,   0.862){ 34}{\la}
     \multiput(  50.00,  17.00)(   0.000,   3.000){ 22}{\la}
     \multiput(  22.00,  45.00)(   3.000,   0.000){ 20}{\la}
     \multiput(  31.60,  31.20)(   2.400,   1.800){ 16}{\la}
    \end{picture} }
\end{picture} 
\caption{ (a) A polyhedron $K$. \quad
          (b) A polyhedron is subdivided into twenty four tetrahedrons 
        with one new
             point each face-polygon and one internal point. }\label{f-2-c}
\end{figure}
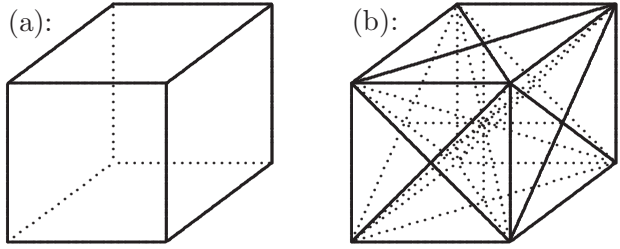

For example, for the polyhedron $K$ of a cube in Figure \ref{f-2-c}(a), 
   we can also subdivide it 
  by cutting each face-polygon
   into four triangles with one added point on each face-polygon, and  
   cutting $K$ into twenty four
   tetrahedrons with one additional  point inside $K$, cf.  Figure \ref{f-2-c}(b).

In the analysis,  we assume the same face-polygon subdivision on the two polyhedrons
   sharing the polygon.
In the computation, the subdivisions of a shared polygon on the two sides can be
  different as the interpolation and the computation on the two polyhedrons are
  independent of each other.
We can extend the theory easily to cover the case that different triangulations on 
   a face-polygon of two polyhedrons,  as both interpolations are the
   2D $H^1$ projection of same $P_{k-2}$ moments.
 
With a proper tetrahedral subdivision of $K=\cup_{T_i\subset K} T_i$, 
   cf. Figures \ref{f-2-t}--\ref{f-2-c},  we define the
   interpolation space on $K$ as, for $k\ge 1$, in \eqref{V-k}, again in 3D.

The interpolation operator is defined by two steps.
On each face polygon $F\in\mathcal{F}_h$,  we solve an $H^1$ projection problem that
    $v_h|_F=\P \tilde v\in \mathbb{V}_k(F)$ (the restriction of $\mathbb{V}_k(K)$ on
     $F$) satisfying
\an{\label{v-F} \ad{ (\nabla_F(v_h|_F -\tilde v), \nabla_F w_h) &=0 \quad \forall w_h\in
             \mathbb{V}_k(F)\cap H^1_0(F), \\
                   v_h|_F -\tilde v&=0 \quad \t{ on } \partial F, } }
where $\nabla_F$ is the 2D face gradient.
This way, the boundary value of $\P \tilde v$ is determined on $\partial K$.
The interpolation in 3D is defined as the 3D local $H^1$-projection, i.e.,
   $v_h=\P \tilde v \in \mathbb{V}_k $ such that 
\an{\label{l-e-3} \ad{ (\nabla(v_h  -\tilde v), \nabla w_h) &=0 \quad \forall w_h\in
             \mathbb{V}_k(K)\cap H^1_0(K), \\
                  v_h- v_h|_F&=0 \quad \t{ on all }  F\in \partial K, } }
where $v_h|_F$ is defined in \eqref{v-F}.

\begin{lemma} 
The interpolation operator
 $\P$ is well defined in \eqref{l-e-3} and it preserves $P_k$ polynomials,
\an{\label{p-p3} \P \tilde v = \tilde v\quad \ \t{if } \ \tilde v\in P_k(K). }
\end{lemma}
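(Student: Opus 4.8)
The plan is to follow the proof of Lemma \ref{l-2d-w}, handling the two steps \eqref{v-F} and \eqref{l-e-3} of the definition in turn. First I would note that, for each face-polygon $F$, problem \eqref{v-F} is exactly the 2D macro-triangle $H^1$-projection of Section 2 on the flat polygon $F$, with boundary data $\tilde v|_{\partial F}\in\mathbb{B}_k(\mathcal{E}_h)$; hence by Lemma \ref{l-2d-w} it has a unique solution $v_h|_F\in\mathbb{V}_k(F)$. On a common edge $e\subset\partial F_1\cap\partial F_2$ of two faces of $K$, the boundary conditions in \eqref{v-F} force $v_h|_{F_1}|_e=\tilde v|_e=v_h|_{F_2}|_e$, so the face projections glue into a single continuous function on $\partial K$; since the tetrahedral subdivision $K=\cup_{T_i\subset K}T_i$ restricts on each face to the triangulation used for $F$, this boundary function is the trace of at least one element of $\mathbb{V}_k(K)$ (extend it by zero at the Lagrange nodes interior to $K$). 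Thus the second constraint in \eqref{l-e-3} is consistent.

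Next I would establish that \eqref{l-e-3} is well posed. With the boundary trace fixed by the first step, \eqref{l-e-3} is a square linear system in the degrees of freedom of $v_h$ interior to $K$, so existence follows from uniqueness. For uniqueness, take $\tilde v=0$; then every face projection vanishes, $v_h=0$ on $\partial K$, and $v_h$ itself lies in $\mathbb{V}_k(K)\cap H^1_0(K)$. Choosing $w_h=v_h$ in \eqref{l-e-3} gives $(\nabla v_h,\nabla v_h)_K=0$, so $\nabla v_h=\b 0$ and $v_h$ is constant on $K$; being zero on $\partial K$, this constant is zero. Hence $\P$ is well defined.

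For the polynomial preservation \eqref{p-p3}, let $\tilde v\in P_k(K)\subset\mathbb{V}_k(K)$. Its restriction to each face $F$ is a polynomial of degree at most $k$ on $F$, hence lies in $\mathbb{V}_k(F)$, so the polynomial-preservation part of Lemma \ref{l-2d-w} applied to $F$ gives $v_h|_F=\tilde v|_F$. Therefore $v_h-\tilde v$ is continuous, piecewise $P_k$ on $\{T_i\}$, and vanishes on all of $\partial K$, i.e.\ $v_h-\tilde v\in\mathbb{V}_k(K)\cap H^1_0(K)$. Taking $w_h=v_h-\tilde v$ in the first line of \eqref{l-e-3} yields $(\nabla(v_h-\tilde v),\nabla(v_h-\tilde v))_K=0$, so $v_h-\tilde v$ is constant; vanishing on $\partial K$, it is identically zero, which is \eqref{p-p3}.

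The only non-routine ingredient is the compatibility bookkeeping in the first paragraph: one must check that the independently solved face projections agree on shared face-edges and that the resulting data on $\partial K$ is the trace of a genuine element of $\mathbb{V}_k(K)$. Both facts come from the standing assumptions on the subdivision — each face carries a fixed triangulation, shared by adjacent polyhedrons, and the tetrahedral subdivision of $K$ restricts to it on the boundary. Once this is in place, the energy-and-constant arguments are verbatim the 2D ones.
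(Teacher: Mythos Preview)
Your proposal is correct and follows essentially the same approach as the paper: invoke Lemma~\ref{l-2d-w} on each face to get well-posedness and polynomial preservation of \eqref{v-F}, then argue that \eqref{l-e-3} is a square system whose homogeneous version forces $\nabla v_h=\b 0$ by testing against $w_h=v_h$, and finally test against $w_h=v_h-\tilde v$ for the preservation \eqref{p-p3}. Your added compatibility bookkeeping (agreement on shared face-edges, the boundary data being the trace of some $\mathbb{V}_k(K)$ element) is more explicit than the paper's version but not a different argument.
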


\begin{proof} Because $\tilde v|_{\partial F}\in \mathbb{B}_k(\mathcal{E}_h)$, 
  $v_h$ can assume the boundary
  condition $v_h=\tilde v$ exactly on $\partial F$, where $F$ is a face-polygon
   in the polyhedral mesh. 
  As we have proved in Lemma \ref{l-2d-w}, $v_h|_F$ is well-defined in
    \eqref{v-F}.  Further by Lemma \ref{l-2d-w}, 
 \an{\label{f-p-k}  v_h|_F = p_k|_F \quad\t{ if } p_k=\tilde v\in P_k(K).  }

The linear system of equations in \eqref{l-e-3}
 is a finite dimensional square system, after the boundary condition is enforced. 
   The existence is implied by the uniqueness. 
 To show the uniqueness, we let $\tilde v=0$ in \eqref{l-e-3}.  
 By Lemma \ref{l-2d-w}, $v_h|_F = 0$.   Letting $w_h=v_h$ in \eqref{l-e-3}, we get
\a{ \nabla v_h =\b 0 \quad \t{ on } \ K. }
Thus $v_h=c$ is one constant on all tetrahedrons of $K$. 
 As $v_h$ is continuous,  by the zero boundary condition,  $v_h=0$ and \eqref{l-e-3} 
    has a unique solution.  

If $\tilde v=p_k\in P_k(K)\subset \mathbb{V}_k(K)$,  then  
   \eqref{l-e-3} says, letting $w_h=v_h-p_k$, 
\a{ \nabla (v_h-p_k)=\b 0. }
Thus $v_h-p_k$ is a constant on $K$, which must be zero by \eqref{f-p-k}.
  \eqref{p-p3} is proved. 
\end{proof}
                \vskip .7cm

\begin{lemma} 
The stabilizer-free virtual element equation \eqref{f-e} has a unique solution,
where the interpolation $\P$ is defined in \eqref{l-e-3}.
\end{lemma}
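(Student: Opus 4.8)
\emph{Proof plan.} The plan is to follow the proof of Lemma~\ref{l-2d-e}, with one extra descent step forced by the fact that the interpolation $\P$ of \eqref{l-e-3} is assembled in two stages: first the 2D face projections \eqref{v-F}, then the 3D element projection. Since $\tilde u,\tilde v\in\tilde V_h$, equation \eqref{f-e} is again a square linear system, so it suffices to prove uniqueness; I would set $f=0$ and take $\tilde v=\tilde u$ in \eqref{f-e}, which yields $|\P\tilde u|_{1,h}=0$ and hence $\nabla\P\tilde u=\b 0$ on every $K$. Thus $\P\tilde u$ is constant on each $K$; because the two face projections of $\P\tilde u$ on a shared face-polygon agree (the analysis assumes a common subdivision there), $\P\tilde u$ is continuous on $\overline\Omega$, and the zero boundary condition forces $\P\tilde u\equiv 0$. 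In particular, by the trace constraint in \eqref{l-e-3}, the face value $v_h|_F=\P\tilde u|_F$ is zero for every face-polygon $F$.

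I would then descend to the faces. Fix $F$. Since the 2D problem \eqref{v-F} with datum $\tilde u$ is solved by $v_h|_F=0$, its boundary relation gives $\tilde u=0$ on $\partial F$ and its variational relation gives $(\nabla_F\tilde u,\nabla_F w_h)=0$ for all $w_h\in\mathbb{V}_k(F)\cap H^1_0(F)$. Integrating by parts on the flat polygon $F$ and invoking the structure of $\tilde V_h$ in \eqref{t-V-h}, I would set $q_{k-2}:=-\Delta_F\tilde u\in P_{k-2}(F)$ and choose $w_h=q_{k-2}\,b_F$, where $b_F\in\mathbb{V}_k(F)\cap H^1_0(F)$ is the positive $P_2$ bubble attached to the triangular subdivision of $F$, built exactly as $b_K$ in \eqref{b-K}; this gives $\int_F q_{k-2}^2 b_F\,d\b x=0$, hence $q_{k-2}=0$ and $\Delta_F\tilde u|_F=0$. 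With $\tilde u|_{\partial F}=0$ and unique solvability of the 2D Laplace equation, this yields $\tilde u|_F=0$; ranging over all faces shows $\tilde u=0$ on $\partial K$ for every $K$. For $k=1$ there is no face datum, $P_{-1}=\{0\}$, and this step is vacuous.

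I would finish by repeating the same argument one dimension higher on $K$ itself. From \eqref{l-e-3} with $v_h=0$ one obtains $(\nabla\tilde u,\nabla w_h)=0$ for all $w_h\in\mathbb{V}_k(K)\cap H^1_0(K)$; integrating by parts, with $p_{k-2}:=-\Delta\tilde u\in P_{k-2}(K)$ and $w_h=p_{k-2}b_K$ for the 3D bubble $b_K$ of \eqref{b-K}, forces $\int_K p_{k-2}^2 b_K\,d\b x=0$, so $p_{k-2}=0$ because $b_K>0$ inside $K$ away from the at most one or two added interior points. Hence $\Delta\tilde u|_K=0$, and combined with $\tilde u=0$ on $\partial K$ the 3D Laplace equation gives $\tilde u=0$ on each $K$, thus on $\Omega$; for $k=1$ this is immediate from $\tilde u=0$ on $\partial K$.

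The main obstacle is exactly this two-level descent. The global energy identity by itself only produces $\P\tilde u\equiv 0$, not $\tilde u\equiv 0$; one has to turn it into $v_h|_F=0$ and re-run the bubble argument on each flat polygon $F$ to kill the face moments of $\tilde u$, and only then run it on $K$ to kill the interior moments. At both levels the two facts that do the work are positivity of the bubble away from the finitely many added interior points (a measure-zero set, so it does not affect $\int q^2 b$) and uniqueness for the Dirichlet Laplace problem in 2D, respectively 3D.
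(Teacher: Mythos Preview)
Your proposal is correct and follows essentially the same approach as the paper: reduce to uniqueness, use the energy identity to get $\P\tilde u=0$, then run the 2D bubble argument on each face-polygon to obtain $\tilde u|_F=0$, and finally run the 3D bubble argument on $K$ to obtain $\tilde u=0$. The paper compresses your face-descent paragraph into a one-line reference to the proof of Lemma~\ref{l-2d-e}, and it introduces a separate label \eqref{b-K3} for the 3D bubble (with the remark that the ``two internal face-triangles'' requirement guarantees each tetrahedron contains an interior mid-edge node), whereas you point back to \eqref{b-K}; the constructions are identical in form, so this is only a labeling difference.
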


\begin{proof} As both $\tilde u, \tilde v \in \tilde V_h$,
  \eqref{f-e} is a finite square  system of linear equations. 
  We only need to show the uniqueness, by letting
       $f=0$ and $\tilde v=\tilde u$ in \eqref{f-e}. 
  As in the 2D proof of Lemma \ref{l-2d-e}, we have
\a{ |\P \tilde u|_{1,h} =0 \quad \t{ and } \ \P \tilde u=0. } 

For $k=1$, $\tilde u$ has no internal degree of freedom on each face polygon $F$, and 
  $\tilde v|_F=\P \tilde u|_F=0$. 
Further $\tilde u$ has no internal degree of freedom on each polyhedron $K$, and 
  $\tilde v|_K=\P \tilde u|_K=0$.  The lemma is proved.

For $k\ge 2$, by the proof of Lemma \ref{l-2d-e}, as each face polygon is subdivided into
   more than one tetrahedron,  we have $\tilde v|_F=\P \tilde u|_F=0$ on every face polygon
   $F$.
Next, as every tetrahedron has at least two internal face triangles,  we define an internal
   $P_2$ bubble by 
\an{\label{b-K3} b_K &=\sum_{i\in \mathcal{N}_2} \phi_i\in  H^1_0(K)\cap \mathbb{V}_k(K), }
where $\mathcal{N}_2$ is the set of all internal mid-edge points of $\{ T_i\}$, $K=\cup T_i$,
   and $\phi_i$ is the $P_2$ Lagrange nodal basis at node $i$.
As every tetrahedron has such an internal $P_2$ node (which is the shared-edge mid-point
   of two internal face triangles),
     $b_K>0$ inside polyhedron $K$ if it does not have any added internal point. 
Otherwise, $b_K>0$ inside $K$ except at one or two
    internal points of $K$ where $b_K=0$.
 
On one polyhedron $K$, let 
\an{\label{w-h3} w_h= p_{k-2} b_K
   \in H^1_0(K)\cap \mathbb{V}_k(K),  }
  where the positive $P_2$ bubble $b_K$ is defined in \eqref{b-K3},
   and $p_{k-2}=-\Delta \tilde u\in P_{k-2}(K)$. 
With the integration by parts, we get from \eqref{l-e1} and \eqref{w-h3} that
\a{ \int_K p_{k-2}^2 b_K d \b x =-\int_K \nabla \tilde u \nabla w_h d \b x =0.  }
As $b_K>0$ inside $K$ (other than 1 or 2 possibly internal points),  it follows that
\a{ p_{k-2}^2 =0 \ \t{ and } \ p_{k-2}  =0 \ \t{ on } \ K. }
As $\Delta \tilde  u=0$ in $K$ and $\tilde u=0$ on 
   $\partial K$, by the unique solution of the Laplace equation,  $\tilde u=0$.
  The lemma is proved. 
\end{proof}
                \vskip .7cm

\section{Convergence}
           
We show that the stabilizer-free virtual element solution converges
  at the optimal order, in this
  section.

\begin{theorem}  Let the solution of \eqref{w-e} be $ u\in H^{k+1}\cap H^1_0(\Omega)$.
 Let the stabilizer-free virtual element solution of \eqref{f-e} be $u_h$.   Then the
   discrete solution converges at the optimal order with the following error estimate, 
  \an{ \label{h-1}  |  u- u_h |_{1}    \le Ch^{k} | u|_{k+1}.  } 
\end{theorem}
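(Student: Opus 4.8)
The plan is to follow the standard Strang-type argument for nonconforming/virtual element methods, exploiting that the interpolation operator $\P$ is an $H^1$-projection that preserves $P_k$ polynomials. First I would establish that $\P$ is $H^1$-stable and gives optimal approximation: since $v_h=\P\tilde v$ minimizes $|v_h-\tilde v|_1$ over $\mathbb{V}_k(K)$ subject to the correct boundary data, and since $P_k(K)\subset \mathbb{V}_k(K)$ with $\P$ preserving $P_k$ (Lemmas~\ref{l-2d-w}, and its 3D analogue), a Bramble--Hilbert / scaling argument on the shape-regular subdivision $K=\cup T_i$ yields $|u-\P \tilde u_I|_{1,K}\le Ch^k|u|_{k+1,K}$ for a suitable $\tilde u_I\in\tilde V_h$ (e.g. the $\tilde V_h$-interpolant of $u$ built from the edge/face/element moments). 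The key technical point here is that the chains of triangles/tetrahedra are uniformly shape-regular under the quasi-uniformity of $\mathcal{T}_h$, which is guaranteed by the finite catalog of admissible subdivisions; this lets all constants be absorbed into a single $C$.

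Next I would write Céa-type inequality. Let $u_I=\P\tilde u_I\in V_h$. Because \eqref{f-e} is a genuine Galerkin method with the broken bilinear form $a_h(\cdot,\cdot)=(\nabla\cdot,\nabla\cdot)_h$ restricted to $V_h$, and this form is positive-definite and coercive on $V_h$ (Lemmas~\ref{l-2d-e} and the 3D analogue show $|\cdot|_{1,h}$ is a norm on $V_h$), we get $|u_h-u_I|_{1,h}^2 = a_h(u_h-u_I,\,u_h-u_I)=(f,u_h-u_I)-a_h(u_I,u_h-u_I)$. The first term I rewrite using \eqref{w-e} as $(\nabla u,\nabla(u_h-u_I))$ — but this requires care since $u_h-u_I\in V_h\subset H^1_0(\Omega)$ (all functions in $V_h$ are globally continuous and vanish on $\partial\Omega$), so there is no consistency error from jumps; the method is conforming at the level of $V_h$. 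Thus $|u_h-u_I|_{1,h}^2=(\nabla(u-u_I),\nabla(u_h-u_I))_h\le |u-u_I|_{1,h}\,|u_h-u_I|_{1,h}$, giving $|u_h-u_I|_{1,h}\le |u-u_I|_{1,h}$, and then by the triangle inequality $|u-u_h|_1\le 2|u-u_I|_{1,h}$.

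The remaining obstacle, and the step I expect to be the real work, is the approximation estimate $|u-\P\tilde u_I|_{1,h}\le Ch^k|u|_{k+1}$ for an explicitly constructed $\tilde u_I$. One must define $\tilde u_I\in\tilde V_h$ so that its edge-polynomial values, face $P_{k-2}$-moments (in 3D), and element $P_{k-2}$-moments match those of $u$ (or of a standard polynomial interpolant of $u$), then compare $\P\tilde u_I$ with $\P(\Pi^\nabla_K u)=\Pi^\nabla_K u$ — no, more precisely with the piecewise polynomial best-approximation $\pi_k u$ of $u$ on each $T_i$. Writing $u-\P\tilde u_I=(u-\pi_k u)+(\pi_k u-\P\tilde u_I)$ and noting $\pi_k u - \P\tilde u_I=\P(\pi_k u-\tilde u_I)$ (using $\P p=p$ for $p\in P_k$ on each $K$, applied piecewise is delicate — one should instead use that $\P$ of a $\tilde V_h$-function close to $u$ in boundary and moment data is close in $H^1$ by stability). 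Quantifying the stability of $\P$ with respect to perturbations of the boundary trace and of the moments — i.e. showing $|\P\tilde v|_{1,K}\le C(|\tilde v|_{1/2,\partial K}+\|\Delta\tilde v\|_{-1,K})$-type bounds uniformly in $K$ — is the crux; once that is in hand, standard trace and Bramble--Hilbert estimates for the data of $u-\tilde u_I$ close the argument and deliver \eqref{h-1}.
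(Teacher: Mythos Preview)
Your overall framework is correct and matches the paper's: you recognize (after an unnecessary detour through Strang-type thinking) that $V_h\subset H^1_0(\Omega)$ makes the method genuinely conforming, derive Galerkin orthogonality and C\'ea's inequality $|u-u_h|_1\le |u-u_I|_1$ for any $u_I\in V_h$, and reduce everything to an approximation estimate.

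The difference is in how the approximation step is handled. The paper's proof is four lines: it simply takes $u_I=I_h u$ to be the Scott--Zhang interpolant on the subdivided triangular/tetrahedral mesh and invokes the standard bound $|u-I_h u|_1\le Ch^k|u|_{k+1}$ from \cite{Scott-Zhang}, with no further argument. There is no construction of a virtual interpolant $\tilde u_I$ via edge/face/moment degrees of freedom, no stability analysis of $\Pi_h^\nabla$ with respect to boundary traces and moments, and no Bramble--Hilbert argument on perturbations of data --- all of which you identify as ``the real work.'' Your programme is more careful (and arguably more honest, since the fine-mesh Scott--Zhang interpolant is not obviously an element of $V_h=\Pi_h^\nabla\tilde V_h$ as defined in \eqref{V-h}; that space is a proper subspace of the $C^0$-$P_k$ space on the refined mesh), but it is considerably more machinery than the paper deploys.
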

                        
\begin{proof} As $w_h\in V_h \subset H^1_0(\Omega) $,  subtracting \eqref{f-e} from
   \eqref{w-e}, we obtain
\a{  (\nabla (u-u_h), \nabla  w_h) =0\quad \forall w_h\in   V_{h }. } 
By the Schwarz inequality,  we get that
\a{    |   u- u_h|_{1}^2  
     & =  (\nabla(  u-  u_h), \nabla(  u- I_h u))\\ 
     &\le |   u- u_h|_{1} |   u- I_h u |_{1} \le Ch^{k} |u|_{k+1}
 |   u- u_h|_{1} ,} 
      where $ I_h  u$ is the Scott-Zhang interpolation on
  subdivided triangular mesh or tetrahedral mesh, cf. \cite{Scott-Zhang}. 
The theorem is proved.
\end{proof}

To get the optimal order $L^2$ error estimate,  we assume a full regularity for the dual
   problem that the solution of 
\an{ \label{dual2} \ad{ -\Delta w &= u-u_h \quad \t{ in } \ \Omega, \\
            w &=0  \quad \t{ on } \ \partial \Omega, } }
satisfies
\an{\label{r} |w|_2 \le C \|u-u_h\|_0.  }

\begin{theorem}  Let the solution of \eqref{w-e} be $ u\in H^{k+1}\cap H^1_0(\Omega)$. 
 Let the stabilizer-free virtual element solution of \eqref{f-e} be $u_h$.   Then the
   discrete solution converges at the optimal order with the following $L^2$ error estimate, 
     assuming  \eqref{r},  
  \a{   \|  u- u_h \|_{0}    \le Ch^{k+1} | u|_{k+1}.  } 
\end{theorem}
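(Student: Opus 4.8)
The plan is to run a standard Aubin--Nitsche duality argument, using the dual problem \eqref{dual2} and the elliptic regularity assumption \eqref{r}. First I would test \eqref{dual2} against $v=u-u_h$, obtaining $\|u-u_h\|_0^2 = (\nabla w, \nabla(u-u_h))$. Since $u_h$ is only piecewise polynomial on the subdivided mesh (and the bilinear form in \eqref{f-e} is the broken form $(\cdot,\cdot)_h$), I would split the right-hand side elementwise and insert a suitable finite element approximation $w_h = \Pi_h^\nabla I_h w \in V_h$ of $w$, where $I_h$ is again the Scott--Zhang interpolation on the triangular/tetrahedral subdivision. Writing $(\nabla w,\nabla(u-u_h)) = (\nabla(w-w_h),\nabla(u-u_h)) + (\nabla w_h,\nabla(u-u_h))$, the second term needs the Galerkin orthogonality $(\nabla(u-u_h),\nabla w_h)_h = 0$ for all $w_h\in V_h$, which follows exactly as in the $H^1$ theorem. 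The first term is bounded by $|w-w_h|_1\,|u-u_h|_1 \le Ch\,|w|_2\,|u-u_h|_1$ by the interpolation estimate, and then $|w|_2 \le C\|u-u_h\|_0$ by \eqref{r} and $|u-u_h|_1 \le Ch^k|u|_{k+1}$ by the previous theorem; dividing by $\|u-u_h\|_0$ gives the claimed $O(h^{k+1})$ bound.

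The one subtlety I would have to address carefully is that $u_h = \Pi_h^\nabla\tilde u$ is a projected virtual quantity, not a genuine $H^1_0(\Omega)$ conforming interpolant in the classical sense, so the Galerkin orthogonality must be stated in the broken form $\sum_K(\nabla(u-u_h),\nabla w_h)_K = 0$ and one must check that $w = -\Delta^{-1}(u-u_h)\in H^2\cap H^1_0$ makes $(\nabla w,\nabla v)$ well-defined and equal to $\sum_K(\nabla w,\nabla v)_K$ for piecewise-$H^1$ test functions $v$ that are globally continuous — this holds because $w$ is globally $H^1$ and $v\in H^1_0(\Omega)$, so no jump terms appear. I would also need the consistency of the projector: because $V_h$ contains (locally) the $P_k$ polynomials and $\Pi_h^\nabla$ preserves them by Lemmas \ref{l-2d-w} and the 3D analogue, the interpolation error estimate $|w - \Pi_h^\nabla I_h w|_{1,h} \le Ch|w|_2$ follows from the stability of $\Pi_h^\nabla$ in the broken $H^1$ seminorm together with the classical Scott--Zhang estimate.

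The main obstacle I expect is establishing the $H^1$-stability of $\Pi_h^\nabla$ on the macro-element space uniformly in $h$, i.e., $|\Pi_h^\nabla v|_{1,K} \le C|v|_{1,K}$ with $C$ independent of the polygon/polyhedron $K$ and of $h$ — this is what is needed to convert the Scott--Zhang estimate for $I_h w$ into the estimate for $w_h = \Pi_h^\nabla I_h w$. Since $\Pi_h^\nabla$ is defined as the local $H^1$-projection onto $\mathbb{V}_k(K)$ with matching boundary data, stability in the seminorm is essentially built in (the projection does not increase the $H^1$ seminorm relative to any competitor with the same trace), but making this rigorous requires a trace/extension argument relating the boundary data to an $H^1$ function on $K$, with constants depending only on the shape-regularity of the subdivision $\{T_i\}$. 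Once that stability is in hand, the rest of the argument is the routine duality bookkeeping sketched above; I would present it in roughly four displayed lines mirroring the $H^1$ theorem's proof.
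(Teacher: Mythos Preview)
Your duality argument is sound, and your observation that $V_h\subset H^1_0(\Omega)$ makes the broken form coincide with the global one is correct; the paper uses this without comment. The one genuine difference from the paper is the choice of $w_h$. Rather than the projected Scott--Zhang interpolant $\Pi_h^\nabla I_h w$, the paper takes $w_h=\Pi_h^\nabla\tilde w$ to be the stabilizer-free virtual element \emph{solution} of the dual problem \eqref{dual2}. With that choice the bound $|w-w_h|_1\le Ch\,|w|_2$ is an immediate instance of the already-proved $H^1$ estimate \eqref{h-1} applied to \eqref{dual2}, so the ``main obstacle'' you flag --- establishing uniform $H^1$-stability of $\Pi_h^\nabla$ on the macro-element space via a trace/extension argument --- never arises. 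The remainder of the paper's proof is exactly your four-line computation: $\|u-u_h\|_0^2=(\nabla w,\nabla(u-u_h))=(\nabla(w-w_h),\nabla(u-u_h))\le Ch\,|w|_2\cdot h^k|u|_{k+1}$, followed by \eqref{r} and cancellation of $\|u-u_h\|_0$. Your route would also succeed, but the paper's choice of $w_h$ buys the approximation estimate for free by recycling the previous theorem rather than proving a new stability lemma.
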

                        
\begin{proof} Let $w_h=\P \tilde w$ be the virtual element solution of \eqref{dual2}.
By \eqref{dual2}, \eqref{r} and \eqref{h-1},  we get
\a{ \|u-u_h\|_0^2 &=(\nabla w, \nabla (u-u_h) ) = 
(\nabla (w-w_h), \nabla (u-u_h) ) \\
  & \le C h |w|_2  h^{k} |u|_{k+1} \le C h^{k+1} |u|_{k+1}\|u-u_h\|_0. }
Canceling a $\|u-u_h\|_0$ on both sides, we proved the optimal-order $L^2$ error bound. 
\end{proof}

                \vskip .7cm

\section{Numerical test}

We solve numerically the Poisson equation \eqref{p-e}
   on domain $\Omega=(0,1)\times(0,1)$, where an exact solution is chosen as
\an{\label{s-1} u(x,y)=\sin (\pi x) \sin (\pi y).  }

We test the $P_k$ ($k=1,2,3,4,5$) stabilizer-free 
    virtual elements on pentagonal meshes shown in Figure \ref{mesh2}.

 \def\csp{\begin{picture}(  100.,  100.)(  0.,  0.) 
     \def\la{\circle*{0.3}}     \multiput(   0.00,   0.00)(   0.250,   0.000){200}{\la}
     \multiput(  50.00,   0.00)(   0.000,   0.250){200}{\la}
     \multiput(  25.00,  25.00)(   0.177,   0.177){141}{\la}
     \multiput(   0.00,  50.00)(   0.177,  -0.177){141}{\la}
     \multiput(   0.00,   0.00)(   0.000,   0.250){200}{\la}
     \multiput( 100.00,   0.00)(   0.000,   0.250){200}{\la}
     \multiput(  75.00,  75.00)(   0.177,  -0.177){141}{\la}
     \multiput(  50.00,  50.00)(   0.177,   0.177){141}{\la}
     \multiput(  50.00,   0.00)(   0.250,   0.000){200}{\la}
     \multiput(  50.00, 100.00)(   0.250,   0.000){200}{\la}
     \multiput(  50.00,  50.00)(   0.000,   0.250){200}{\la}
     \multiput( 100.00,  50.00)(   0.000,   0.250){200}{\la}
     \multiput(   0.00,  50.00)(   0.000,   0.250){200}{\la}
     \multiput(   0.00, 100.00)(   0.250,   0.000){200}{\la} \end{picture}} 
\begin{figure}[ht]
 \begin{center} \setlength\unitlength{1.0pt}
\begin{picture}(220,113)(0,0) 

  { \setlength\unitlength{1pt}
 \multiput(0,0)(50,0){1}{\multiput(0,0)(0,50){1}{\csp}}}
 
   \put(0,103){Grid 1:} \put(120,103){Grid 2:} 
  { \setlength\unitlength{0.5pt}
  \multiput(240,0)(100,0){2}{\multiput(0,0)(0,100){2}{\csp}} }

 \end{picture}\end{center}
\caption{The first two levels of pentagonal meshes for the computation in
     Table \ref{ta1}.}
\label{mesh2}
\end{figure}
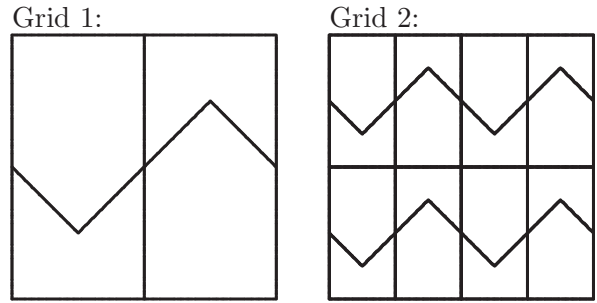

In Table \ref{ta1},  we compute the $P_1$--$P_5$ stabilizer-free 
    virtual elements solutions for
  \eqref{s-1} on the pentagonal meshes shown in Figure \ref{mesh2}.
All virtual element solutions  converge at rates of the optimal order in both 
  $L^2$ and $H^1$ norms.

\begin{table}[ht]
  \centering  \renewcommand{\arraystretch}{1.1}
\caption{The error profile for \eqref{s-1} on meshes shown in Figure \ref{mesh2}.}
  \label{ta1} 
\begin{tabular}{c|cc|cc}
\hline 
Grid &   $\|\Pi^\nabla_h u-u_h\|_{0}$  &  $O(h^r)$ 
   &   $|\Pi^\nabla_h u-u_h|_{1}$ & $O(h^r)$ 
  \\ \hline 
    &  \multicolumn{4}{c}{ By the $P_1$ stabilizer-free virtual element. } \\
\hline   
 7&   0.4462E-04 & 2.00&   0.5834E-02 & 1.00 \\
 8&   0.1116E-04 & 2.00&   0.2916E-02 & 1.00 \\
 9&   0.2789E-05 & 2.00&   0.1458E-02 & 1.00 \\
\hline
    &  \multicolumn{4}{c}{ By the $P_2$ stabilizer-free virtual element. } \\
\hline   
 7&   0.1930E-06 & 3.00&   0.1131E-03 & 2.00 \\
 8&   0.2413E-07 & 3.00&   0.2826E-04 & 2.00 \\
 9&   0.3016E-08 & 3.00&   0.7066E-05 & 2.00 \\
\hline
    &  \multicolumn{4}{c}{ By the $P_3$ stabilizer-free virtual element. } \\
\hline   
 6&   0.2486E-07 & 4.00&   0.8973E-05 & 3.00 \\
 7&   0.1554E-08 & 4.00&   0.1122E-05 & 3.00 \\
 8&   0.9716E-10 & 4.00&   0.1402E-06 & 3.00 \\
\hline
    &  \multicolumn{4}{c}{ By the $P_4$ stabilizer-free virtual element. } \\
\hline   
 5&   0.1051E-07 & 5.00&   0.1977E-05 & 4.00 \\
 6&   0.3286E-09 & 5.00&   0.1236E-06 & 4.00 \\
 7&   0.1027E-10 & 5.00&   0.7724E-08 & 4.00 \\
\hline
    &  \multicolumn{4}{c}{ By the $P_5$ stabilizer-free virtual element. } \\
\hline   
 3&   0.7591E-06 & 6.02&   0.4741E-04 & 4.98 \\
 4&   0.1181E-07 & 6.01&   0.1488E-05 & 4.99 \\
 5&   0.1846E-09 & 6.00&   0.4659E-07 & 5.00 \\
\hline 
    \end{tabular}%
\end{table}%

 \def\csq{\begin{picture}(  100.,  100.)(  0.,  0.) 
     \def\la{\circle*{0.3}}     \multiput(   0.00,   0.00)(   0.250,   0.000){200}{\la}
     \multiput(  44.44,  14.29)(   0.091,  -0.233){ 61}{\la}
     \multiput(  22.22,  22.22)(   0.235,  -0.084){ 94}{\la}
     \multiput(  18.18,  44.44)(   0.045,  -0.246){ 90}{\la}
     \multiput(   0.00,  50.00)(   0.239,  -0.073){ 76}{\la}
     \multiput(   0.00,   0.00)(   0.000,   0.250){200}{\la}
     \multiput( 100.00,   0.00)(   0.000,   0.250){200}{\la}
     \multiput(  81.82,  50.00)(   0.250,   0.000){ 72}{\la}
     \multiput(  72.73,  28.57)(   0.098,   0.230){ 93}{\la}
     \multiput(  44.44,  14.29)(   0.223,   0.113){126}{\la}
     \multiput(  50.00,   0.00)(   0.250,   0.000){200}{\la}
     \multiput(  50.00, 100.00)(   0.250,   0.000){200}{\la}
     \multiput(  44.44,  85.71)(   0.091,   0.233){ 61}{\la}
     \multiput(  44.44,  85.71)(   0.207,  -0.140){136}{\la}
     \multiput(  72.73,  66.67)(   0.120,  -0.219){ 75}{\la}
     \multiput( 100.00,  50.00)(   0.000,   0.250){200}{\la}
     \multiput(   0.00,  50.00)(   0.000,   0.250){200}{\la}
     \multiput(  18.18,  44.44)(   0.045,   0.246){ 90}{\la}
     \multiput(  22.22,  66.67)(   0.190,   0.163){117}{\la}
     \multiput(   0.00, 100.00)(   0.250,   0.000){200}{\la}
     \multiput(  44.44,  85.71)(   0.074,  -0.239){149}{\la}
     \multiput(  44.44,  14.29)(   0.074,   0.239){149}{\la}

 \end{picture}}

\begin{figure}[ht]
 \begin{center} \setlength\unitlength{1.0pt}
\begin{picture}(220,113)(0,0) 

  { \setlength\unitlength{1pt}
 \multiput(0,0)(50,0){1}{\multiput(0,0)(0,50){1}{\csq}}}
 
   \put(0,103){Grid 1:} \put(120,103){Grid 2:} 
  { \setlength\unitlength{0.5pt}
  \multiput(240,0)(100,0){2}{\multiput(0,0)(0,100){2}{\csq}} }

 \end{picture}\end{center}
\caption{The first two levels of hexagonal grids for the computation in
     Table \ref{ta2}.}
\label{mesh1}
\end{figure}
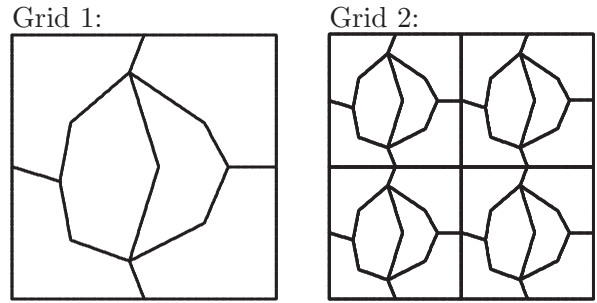

In Table \ref{ta2}, we compute the $P_1$--$P_5$ stabilizer-free 
    virtual elements solutions for
  \eqref{s-1} on the hexagonal meshes shown in Figure \ref{mesh1}.
All virtual element solutions  converge at rates of the optimal order in both 
  $L^2$ and $H^1$ norms.

\begin{table}[ht]
  \centering  \renewcommand{\arraystretch}{1.1}
\caption{The error profile for \eqref{s-1} on meshes shown in Figure \ref{mesh1}.}
  \label{ta2} 
\begin{tabular}{c|cc|cc}
\hline 
Grid &   $\|\Pi^\nabla_h u-u_h\|_{0}$  &  $O(h^r)$ 
   &   $|\Pi^\nabla_h u-u_h|_{1}$ & $O(h^r)$ 
  \\ \hline 
    &  \multicolumn{4}{c}{ By the $P_1$ stabilizer-free virtual element. } \\
\hline   
 6&   0.1142E-03 & 2.00&   0.1255E-01 & 1.00 \\
 7&   0.2855E-04 & 2.00&   0.6273E-02 & 1.00 \\
 8&   0.7137E-05 & 2.00&   0.3136E-02 & 1.00 \\
\hline
    &  \multicolumn{4}{c}{ By the $P_2$ stabilizer-free virtual element. } \\
\hline   
 6&   0.1011E-05 & 3.00&   0.4338E-03 & 2.00 \\
 7&   0.1265E-06 & 3.00&   0.1085E-03 & 2.00 \\
 8&   0.1581E-07 & 3.00&   0.2712E-04 & 2.00 \\
\hline
    &  \multicolumn{4}{c}{ By the $P_3$ stabilizer-free virtual element. } \\
\hline   
 6&   0.2132E-07 & 4.00&   0.1081E-04 & 3.00 \\
 7&   0.1332E-08 & 4.00&   0.1351E-05 & 3.00 \\
 8&   0.8329E-10 & 4.00&   0.1689E-06 & 3.00 \\
\hline
    &  \multicolumn{4}{c}{ By the $P_4$ stabilizer-free virtual element. } \\
\hline   
 5&   0.1200E-07 & 4.99&   0.3029E-05 & 4.00 \\
 6&   0.3756E-09 & 5.00&   0.1894E-06 & 4.00 \\
 7&   0.1175E-10 & 5.00&   0.1185E-07 & 4.00 \\
\hline
    &  \multicolumn{4}{c}{ By the $P_5$ stabilizer-free virtual element. } \\
\hline   
 3&   0.1177E-05 & 5.98&   0.8909E-04 & 4.97 \\
 4&   0.1842E-07 & 6.00&   0.2800E-05 & 4.99 \\
 5&   0.2895E-09 & 5.99&   0.8776E-07 & 5.00 \\
\hline 
    \end{tabular}%
\end{table}%

We solve the 3D Poisson equation \eqref{p-e}
   on domain $\Omega=(0,1)^3$, where an exact solution is chosen as
\an{\label{s-2} u(x,y,z)= 2^6 (x-x^2)(y-y^2)(z-z^2).  }

\begin{figure}[ht]
\begin{center}
 \setlength\unitlength{1pt}
    \begin{picture}(320,122)(0,3)
    \put(0,0){\begin{picture}(110,110)(0,0)\put(0,102){Grid 1:}
       \multiput(0,0)(80,0){2}{\line(0,1){80}}  \multiput(0,0)(0,80){2}{\line(1,0){80}}
       \multiput(0,80)(80,0){2}{\line(1,1){20}} \multiput(0,80)(20,20){2}{\line(1,0){80}}
       \multiput(80,0)(0,80){2}{\line(1,1){20}}  \multiput(80,0)(20,20){2}{\line(0,1){80}}
      \end{picture}}
    \put(110,0){\begin{picture}(110,110)(0,0)\put(0,102){Grid 2:}
       \multiput(0,0)(40,0){3}{\line(0,1){80}}  \multiput(0,0)(0,40){3}{\line(1,0){80}}
       \multiput(0,80)(40,0){3}{\line(1,1){20}} \multiput(0,80)(10,10){3}{\line(1,0){80}}
       \multiput(80,0)(0,40){3}{\line(1,1){20}}  \multiput(80,0)(10,10){3}{\line(0,1){80}}
      \end{picture}}
    \put(220,0){\begin{picture}(110,110)(0,0)\put(0,102){Grid 3:}
       \multiput(0,0)(20,0){5}{\line(0,1){80}}  \multiput(0,0)(0,20){5}{\line(1,0){80}}
       \multiput(0,80)(20,0){5}{\line(1,1){20}} \multiput(0,80)(5,5){5}{\line(1,0){80}}
       \multiput(80,0)(0,20){5}{\line(1,1){20}}  \multiput(80,0)(5,5){5}{\line(0,1){80}}
      \end{picture}}

    \end{picture}
    \end{center}
\caption{ The first three grids for the computation in Table \ref{ta3}.  }
\label{grid3d}
\end{figure}
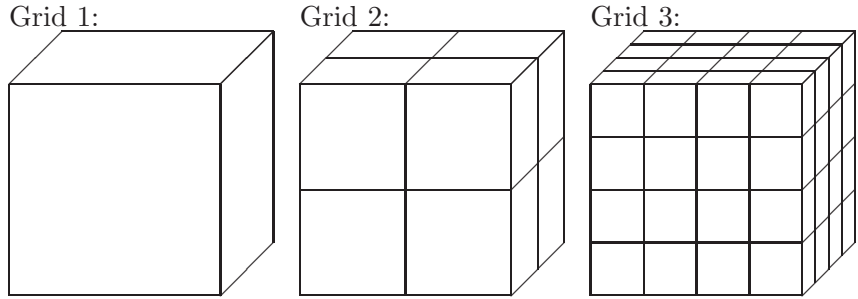

In Table \ref{ta3}, we compute the 3D $P_1$--$P_5$ stabilizer-free 
    virtual elements solutions for
  \eqref{s-2} on the cubic meshes shown in Figure \ref{grid3d}.
All virtual element solutions  converge at rates of the optimal order in both 
  $L^2$ and $H^1$ norms. 
In particular,  we have one order superconvergence in $H^1$ semi-norm for the
   $P_1$ stabilizer-free  virtual element solutions.
Also, we have one order superconvergence in both $H^1$ semi-norm and $L^2$ for the
   $P_2$ stabilizer-free  virtual element solutions.
But we do not have a theory for these superconvergences. 
It is surprising that the $P_2$ solutions are more accurate than the $P_3$ solutions
   in Table \ref{ta3}.

\begin{table}[ht]
  \centering  \renewcommand{\arraystretch}{1.1}
\caption{The error profile for \eqref{s-2} on cubic meshes shown in Figure \ref{grid3d}.}
  \label{ta3} 
\begin{tabular}{c|cc|cc}
\hline 
Grid &   $\|\Pi^\nabla_h u-u_h\|_{0}$  &  $O(h^r)$ 
   &   $|\Pi^\nabla_h u-u_h|_{1}$ & $O(h^r)$ 
  \\ \hline 
    &  \multicolumn{4}{c}{ By the 3D $P_1$ stabilizer-free virtual element. } \\
\hline   
 5&  0.4944E-02&1.92&  0.2780E-01&1.95 \\
 6&  0.1254E-02&1.98&  0.7011E-02&1.99 \\
 7&  0.3145E-03&1.99&  0.1757E-02&2.00 \\
\hline
    &  \multicolumn{4}{c}{ By the 3D $P_2$ stabilizer-free virtual element. } \\
\hline   
 5&  0.1132E-04&3.89&  0.1001E-02&2.85 \\
 6&  0.7294E-06&3.96&  0.1313E-03&2.93 \\
 7&  0.4615E-07&3.98&  0.1680E-04&2.97 \\
\hline
    &  \multicolumn{4}{c}{ By the 3D $P_3$ stabilizer-free virtual element. } \\
\hline   
 4&  0.4149E-04&3.84&  0.1569E-02&2.83 \\
 5&  0.2688E-05&3.95&  0.2053E-03&2.93 \\
 6&  0.1703E-06&3.98&  0.2618E-04&2.97 \\
\hline
    &  \multicolumn{4}{c}{ By the 3D $P_4$ stabilizer-free virtual element. } \\
\hline   
 4&  0.7316E-06&4.94&  0.4987E-04&3.94 \\
 5&  0.2331E-07&4.97&  0.3169E-05&3.98 \\
 6&  0.7356E-09&4.99&  0.1997E-06&3.99 \\
\hline
    &  \multicolumn{4}{c}{ By the 3D $P_5$ stabilizer-free virtual element. } \\
\hline   
 3&  0.2986E-05&5.97&  0.7301E-04&4.96 \\
 4&  0.4707E-07&5.99&  0.2318E-05&4.98 \\
 5&  0.7386E-09&5.99&  0.7304E-07&4.99 \\
\hline 
    \end{tabular}%
\end{table}%

\section{Ethical Statement}

\subsection{Compliance with Ethical Standards} { \ }

   The submitted work is original and is not published elsewhere in any form or language.

\subsection{Funding } { \ }

Yanping Lin is supported in part by HKSAR GRF 15302922  and polyu-CAS joint Lab.

Mo Mu is supported in part by Hong Kong RGC CERG HKUST16301218. 

\subsection{Conflict of Interest} { \ }

  There is no potential conflict of interest .

\subsection{Ethical approval} { \ }

  This article does not contain any studies involving animals.
This article does not contain any studies involving human participants.
  
\subsection{Informed consent}  { \ }

This research does not have any human participant.  

\subsection{Availability of supporting data } { \ }

This research does not use any external or author-collected data.

\subsection{Authors' contributions } { \ }

All authors made equal contribution.
  
\subsection{Acknowledgments } { \ }

  None.

\end{document}